\documentclass[amssymb, 11pt]{amsart}
\usepackage{latexsym}
\usepackage{young}
\usepackage{hyperref}

\newlength{\standardunitlength}
\setlength{\standardunitlength}{0.0125in}


\newcommand{\bea}{\begin{eqnarray}}
\newcommand{\ena}{\end{eqnarray}}
\newcommand{\beas}{\begin{eqnarray*}}
\newcommand{\enas}{\end{eqnarray*}}

\newcommand{\ignore}[1]{}

\newtheorem{prop}{Proposition}[section]

\newtheorem{lemma}[prop]{Lemma}

\newtheorem{theorem}[prop]{Theorem}

\begin{document}

\title [A generating function] {A generating function approach to counting theorems for square-free
polynomials and maximal tori}

\author{Jason Fulman}

\address{Department of Mathematics\\
        University of Southern California\\
        Los Angeles, CA, 90089}
\email{fulman@usc.edu}

\thanks{Fulman was supported by NSA grant H98230-13-1-0219. We thank Robert Guralnick for useful discussions.}

\keywords{generating function, square-free polynomial, maximal torus}

\date{October 13, 2014}

\begin{abstract} A recent paper of Church, Ellenberg, and Farb uses topology and representation theory of the symmetric group
to prove enumerative results about square-free polynomials and $F$-stable maximal tori of $GL_n(\overline{F_q})$. In this note,
we use generating functions to give elementary proofs of some of their results, and some extensions.
\end{abstract}

\maketitle

\section{Introduction} \label{into}

A recent paper by Church, Ellenberg, and Farb \cite{CEF} (building on themes of Lehrer \cite{Le}, \cite{L3}) relates topology and representation theory to enumerative problems about varieties over finite fields, with a useful flow of information in both directions. They develop counting theorems for random square-free monic degree $n$ polynomials over a finite field $F_q$, and for $F$-stable maximal tori of $GL_n(\overline{F_q})$; here $F$ denotes the Frobenius map. They prove the following results using topology and representation theory.

\begin{center}
Results for square-free polynomials
\end{center}

\begin{enumerate}

\item The number of monic degree $n$ square-free polynomials is $q^n-q^{n-1}$.

\item The expected number of linear factors of a random monic degree $n$ square-free polynomial is
\[ 1 - \frac{1}{q} + \frac{1}{q^2} - \frac{1}{q^3} + \cdots \pm \frac{1}{q^{n-2}} .\]

\item The expected excess of irreducible vs. reducible quadratic factors tends to
\[ \frac{1}{q}-\frac{3}{q^2}+\frac{4}{q^3}-\frac{4}{q^4}+\frac{5}{q^5}-\frac{7}{q^6}+\frac{8}{q^7}-\frac{8}{q^8} + \cdots   \] as $n \rightarrow \infty$.

\item The discriminant of a random monic degree $n$ square-free polynomial is equidistributed in $F_q^*$ between residues and nonresidues

\end{enumerate}

Of these results, the first is well known and elementary; see for example \cite{FGP} and the references therein.

\begin{center}
Results for $F$-stable maximal tori of $GL_n(\overline{F_q})$.
\end{center}

\begin{itemize}

\item The number of $F$-stable maximal tori of $GL_n(\overline{F_q})$ is $q^{n^2-n}$.

\item The expected number of eigenvectors of a random $F$-stable maximal torus of $GL_n(\overline{F_q})$ is
\[ 1 + \frac{1}{q} + \frac{1}{q^2} + \cdots + \frac{1}{q^{n-1}} \]

\item The expected excess of reducible vs. irreducible dimension 2 subtori tends to
\[ \frac{1}{q} + \frac{1}{q^2} + \frac{2}{q^3} + \frac{2}{q^4} + \frac{3}{q^5} + \frac{3}{q^6} + \frac{4}{q^7} + \frac{4}{q^8} + \cdots \]
as $n \rightarrow \infty$.

\item The number of irreducible factors is more likely to be $\equiv$ n mod 2 than not, with bias equal to the square root of the
number of tori

\end{itemize}

Of these results, the first is a theorem of Steinberg \cite{St}, proved using the Grothendieck-Lefschetz formula by
Srinivasan \cite{S} and Lehrer \cite{Le}. The other results may be new, and according to \cite{CEF}, ``not so easy to prove''.

The main goal of the current paper is to prove all of the above results using generating functions. The use of generating functions
to study square-free polynomials is not new (see \cite{FGP}), though some of our arguments are. We do obtain one new result; a ``finite n'' formula
for the expected excess of irreducible vs. reducible quadratic factors. The use of generating functions to study $F$-stable maximal tori of
$GL_n(\overline{F_q})$ does appear to be new, and gives elementary proofs for all of the above results from \cite{CEF}. We do obtain one new result:
a ``finite n'' formula for the expected excess of reducible vs. irreducible dimension 2 subtori. Our applications of generating function methods
to random maximal tori only scratch the surface, and we expect that there will be many more applications, such as finding analogs for maximal tori
of results in Section 4 of \cite{CEF}. We also note that our methods for studying maximal tori will generalize to the other classical groups
(unitary, symplectic, orthogonal), since for all such groups there are nice product formulas for the order of tori and for centralizer sizes in the corresponding Weyl group.

To close the introduction, we note that generating function techniques for enumerative problems over finite fields can be very powerful. For example, it
is proved (independently in \cite{F} and \cite{W}) using generating functions that:
\begin{enumerate}
\item The $n \rightarrow \infty$ limiting proportion of elements of $GL(n,q)$ with square-free characteristic polynomial is $1-1/q$.
\item The $n \rightarrow \infty$ limiting proportion of cyclic elements of $GL(n,q)$ (that is elements whose characteristic polynomial is equal to its minimal polynomial) is $(1-1/q^5)/(1+1/q^3)$.
\end{enumerate} At the current time no other proofs of these two results are known.

\section{Square free polynomials} \label{squarefree}

The purpose of this section is to use generating functions to study random square-free polynomials over a finite field, proving the results of \cite{CEF} stated in the introduction. It is known that generating functions can be used to study random square-free polynomials (see for instance \cite{FGP}). However some of the methods of calculations in the proofs of Theorems \ref{lin} and \ref{exc} may be new, and we do obtain a ``finite n'' version of a result from \cite{CEF}. This section serves as a useful ``warm-up'' for the more subtle results in Section \ref{maximal}.

In what follows we let $N(d,q)$ denote the number of monic irreducible degree $d$ polynomials over the field $F_q$. The following lemma will be useful throughout this section.

\begin{lemma} \label{prod} \[ \prod_{d \geq 1} (1-u^d)^{-N(d,q)} = \frac{1}{1-qu} .\]
\end{lemma}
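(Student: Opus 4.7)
The plan is to read both sides as generating functions for monic polynomials over $F_q$, matched through unique factorization.

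First, I would observe that the number of monic polynomials of degree $n$ over $F_q$ is $q^n$, since such a polynomial is determined by its $n$ lower coefficients. Hence
\[
\sum_{n \geq 0} q^n u^n = \frac{1}{1-qu},
\]
which is exactly the right-hand side.

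Next, I would attack the left-hand side via the Euler product factorization. For each monic irreducible polynomial $f$ of degree $d$, the geometric series
\[
\frac{1}{1-u^d} = \sum_{k \geq 0} u^{k \cdot \deg f}
\]
records the possible multiplicities $k$ with which $f$ can appear in a factorization, weighted by the degree contributed. Since there are exactly $N(d,q)$ monic irreducible polynomials of degree $d$, grouping the factors by degree gives
\[
\prod_{f \text{ monic irred}} \frac{1}{1-u^{\deg f}} = \prod_{d \geq 1} (1-u^d)^{-N(d,q)}.
\]
By unique factorization of monic polynomials over $F_q$ into monic irreducibles, expanding this product and collecting by total degree yields $\sum_{g \text{ monic}} u^{\deg g} = \sum_{n \geq 0} q^n u^n$, matching the right-hand side.

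There is no real obstacle here; the lemma is the polynomial analogue of the Euler product for the Riemann zeta function, and the only ``step'' is recognizing that the two generating functions enumerate the same objects (all monic polynomials, either by degree directly or by prime factorization data). I would write the proof as two displayed equalities with a one-line justification for each, and that completes it.
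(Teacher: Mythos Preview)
Your proof is correct and follows essentially the same approach as the paper: both interpret each side as the generating function for monic polynomials over $F_q$, using unique factorization into irreducibles on the left and the count $q^n$ of monic degree $n$ polynomials on the right. You spell out the Euler-product interpretation in slightly more detail, but the argument is the same.
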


\begin{proof} By uniqueness of factorization of polynomials over the field $F_q$, it follows that the coefficient of $u^n$ on the left hand side of the lemma is equal to the total number of monic degree $n$ polynomials over $F_q$, which is $q^n$. As this is equal to the coefficient of $u^n$ on the right hand side of the lemma, the result follows.
\end{proof}

As a first example, we use generating functions to enumerate the square-free monic degree $n$ polynomials with coefficients in $F_q$. This application is known (see for instance \cite{FGP}), but we include it for expository purposes.

\begin{theorem} \label{total} For $n \geq 2$, the number of square-free monic degree $n$ polynomials with coefficients in $F_q$ is equal to $q^n-q^{n-1}$.
\end{theorem}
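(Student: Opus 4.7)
The plan is to write down the generating function for square-free monic polynomials, simplify it using Lemma \ref{prod}, and then read off the coefficient of $u^n$. By unique factorization, a square-free monic polynomial is uniquely specified by a choice, for each degree $d$ and each of the $N(d,q)$ monic irreducibles of that degree, of whether or not that irreducible appears. So the generating function for square-free monics by degree is
\[ \sum_{n \geq 0} (\text{\# square-free monics of degree } n)\, u^n = \prod_{d \geq 1} (1+u^d)^{N(d,q)}. \]

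Next I would use the identity $1+u^d = (1-u^{2d})/(1-u^d)$ to rewrite this product as a ratio of two products of the form appearing in Lemma \ref{prod}:
\[ \prod_{d \geq 1} (1+u^d)^{N(d,q)} = \frac{\prod_{d \geq 1} (1-u^d)^{-N(d,q)}}{\prod_{d \geq 1} (1-u^{2d})^{-N(d,q)}}. \]
By Lemma \ref{prod}, the numerator is $1/(1-qu)$, and substituting $u \mapsto u^2$ in the same lemma shows the denominator is $1/(1-qu^2)$. Therefore the generating function equals $(1-qu^2)/(1-qu)$.

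Finally I would expand $(1-qu^2)/(1-qu) = (1-qu^2)\sum_{n \geq 0} q^n u^n$ and extract the coefficient of $u^n$: for $n \geq 2$ this gives $q^n - q \cdot q^{n-2} = q^n - q^{n-1}$, as claimed. There is really no obstacle here — the entire proof is a one-line manipulation of the generating function, and the only thing to be careful about is the substitution $u \mapsto u^2$ in Lemma \ref{prod}, which is valid because both sides of that lemma are formal power series identities. This same technique of rewriting $(1+u^d)$ via $(1-u^{2d})/(1-u^d)$ is exactly what will make the subsequent theorems about linear factors and quadratic excesses tractable.
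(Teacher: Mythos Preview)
Your proof is correct and essentially identical to the paper's: both set up the generating function $\prod_{d\geq 1}(1+u^d)^{N(d,q)}$, rewrite $1+u^d=(1-u^{2d})/(1-u^d)$, apply Lemma~\ref{prod} (with the substitution $u\mapsto u^2$ for the numerator) to obtain $(1-qu^2)/(1-qu)$, and then read off the coefficient of $u^n$. There is no meaningful difference in approach.
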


\begin{proof} The number of square-free monic degree $n$ polynomials is clearly the coefficient of $u^n$ in
\[ \prod_{d \geq 1} (1+u^d)^{N(d,q)} .\] Note by Lemma \ref{prod} that
\[ \prod_{d \geq 1} (1+u^d)^{N(d,q)} = \frac{ \prod_{d \geq 1} (1-u^{2d})^{N(d,q)}}{ \prod_{d \geq 1} (1-u^{d})^{N(d,q)}} = \frac{1-u^2q}{1-uq} .\] It is easily seen that for $n \geq 2$ the coefficient of $u^n$ in $(1-u^2q)/(1-uq)$ is equal to $q^n-q^{n-1}$.
\end{proof}

As mentioned in the introduction, the paper \cite{CEF} computed the expected number of linear factors of a random monic degree $n$ square-free polynomial.
Theorem \ref{lin} derives this using generating functions.

\begin{theorem} \label{lin} For $n \geq 2$, the expected number of linear factors of a random monic degree $n$ square-free polynomial is
\[ 1 - \frac{1}{q} + \frac{1}{q^2} - \frac{1}{q^3} + \cdots \pm \frac{1}{q^{n-2}} .\]
\end{theorem}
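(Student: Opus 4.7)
The plan is to compute, via a bivariate generating function, the total number of linear factors summed over all square-free monic polynomials of degree $n$, and then divide by the count $q^n - q^{n-1}$ from Theorem \ref{total}. Since a square-free polynomial contains each of the $q$ monic linear polynomials either not at all or with multiplicity one, introducing a variable $x$ to mark each linear factor produces
\[ F(u,x) = (1+xu)^q \prod_{d \geq 2}(1+u^d)^{N(d,q)} , \]
and the coefficient of $u^n$ in $\frac{\partial F}{\partial x}(u,1)$ is precisely the quantity of interest.

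A direct differentiation gives $\frac{\partial F}{\partial x}(u,1) = qu(1+u)^{q-1}\prod_{d\ge 2}(1+u^d)^{N(d,q)}$. Using $N(1,q) = q$ to fold the $d=1$ factor back into the product and then applying Lemma \ref{prod} exactly as in the proof of Theorem \ref{total} should reduce this to
\[ \frac{\partial F}{\partial x}(u,1) = \frac{qu}{1+u}\cdot\frac{1-qu^2}{1-qu} = \frac{qu(1-qu^2)}{(1+u)(1-qu)} . \]
The problem thus collapses to extracting the coefficient of $u^n$ from a simple rational function. I would carry this out via partial fractions, decomposing $\frac{1}{(1+u)(1-qu)}$ as $\frac{1/(q+1)}{1+u} + \frac{q/(q+1)}{1-qu}$ and reading off coefficients from the two resulting geometric series. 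After combining the contributions of the $qu$ and $-q^2u^3$ terms in the numerator, the total number of linear factors across all degree $n$ square-free monic polynomials should simplify to $\frac{(q-1)(q^n+(-1)^n q)}{q+1}$.

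Dividing by $q^n - q^{n-1} = q^{n-1}(q-1)$ then yields the expected value $\frac{q+(-1)^n q^{2-n}}{q+1}$, and the final step is to recognize this as the closed form of the finite geometric series $\sum_{k=0}^{n-2}(-1/q)^k$ stated in the theorem (apply $\sum_{k=0}^{m-1} r^k = (1-r^m)/(1-r)$ with $r = -1/q$ and $m = n-1$). The only obstacle I anticipate is the algebraic bookkeeping in the partial fractions step and verifying the small-$n$ edge cases (notably $n=2$, where the $u^3$ contribution is absent); the conceptual content lies entirely in the bivariate marking trick and the application of Lemma \ref{prod}.
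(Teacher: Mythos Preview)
Your proposal is correct and follows essentially the same approach as the paper: the same bivariate marking generating function, the same differentiation at $x=1$, and the same intermediate rational function $\dfrac{qu(1-qu^2)}{(1+u)(1-qu)}$. The only difference is cosmetic, in the final coefficient-extraction step: the paper rescales $u\mapsto u/q$ so that the $(1-qu)^{-1}$ factor becomes $(1-u)^{-1}$ and the answer appears directly as a partial sum, whereas you do an explicit partial-fraction decomposition and then recognize the resulting closed form as the finite geometric series---both are routine and equivalent.
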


\begin{proof} Let $P(n,q)$ denote the set of monic degree $n$ square-free polynomials over $F_q$, and for $P \in P(n,q)$, let $n_1(P)$ denote the number of
linear factors of $P$. Then
\[ \sum_{P \in P(n,q)} x^{n_1(P)} \] is equal to the coefficient of $u^n$ in
\begin{eqnarray*}
(1+xu)^{N(1,q)} \prod_{d \neq 1} (1+u^d)^{N(d,q)} & = & \frac{(1+xu)^q}{(1+u)^q} \prod_{d \geq 1} (1+u^d)^{N(d,q)} \\
& = & \frac{(1+xu)^q}{(1+u)^q} \frac{1-u^2q}{1-uq},
\end{eqnarray*} where the second equality followed by arguing as in Theorem \ref{total}. To compute the expected value of $n_1$, we differentiate
with respect to $x$, set $x=1$, then take the coefficient of $u^n$, and divide by $q^n-q^{n-1}$.

Differentiating with respect to $x$ and setting $x=1$ yields \[ \frac{qu}{(1+u)} \frac{(1-u^2q)}{1-uq} .\] Taking the coefficient of $u^n$ and dividing by $q^n(1-1/q)$ gives the coefficient of $u^n$ in \[ \frac{1}{1-1/q} \frac{u}{1+u/q} \frac{1-u^2/q}{1-u} .\] This is easily seen to be equal to
\[ 1 - \frac{1}{q} + \frac{1}{q^2} - \frac{1}{q^3} + \cdots \pm \frac{1}{q^{n-2}}, \] proving the theorem.
\end{proof}

As stated in the introduction, the paper \cite{CEF} computed the expected excess of irreducible versus reducible quadratic factors of a random monic degree $n$ square-free polynomial, in the limit that $n \rightarrow \infty$. We recover this result using generating functions, and also determine an exact formula for finite $n$.

\begin{theorem} \label{exc}
\begin{enumerate}

\item The expected excess of irreducible versus reducible quadratic factors of a random monic degree $n$ square-free polynomial
tends to \[ \frac{1}{q} \frac{1-1/q}{(1+1/q)^2(1+1/q^2)} \] as $n \rightarrow \infty$.

\item The expected excess of irreducible versus reducible quadratic factors of a random monic degree $n$ square-free polynomial is equal to $0$ for $n=2$, $1/q$ for $n=3$, and for $n \geq 4$ is \[ \left[ \sum_{i=1}^{n-3} \frac{(-1)^{i+1} a_i}{q^i} \right] - \frac{(-1)^n b_{n-3}}{q^{n-2}} .\] Here $a_1,a_2,a_3,\cdots $ is the sequence
\[ 1,3,4,4,5,7,8,8,9,11,12,12,13,15,16,16, \cdots \] of odd numbers with two times the positive even numbers
repeated in order between them, and $b_1,b_2,b_3,\cdots $ is the sequence
\[ 2,2,2,3,4,4,4,5,6,6,6,7,8,8,8,9,10,10,10 \cdots \] of three even numbers followed by one odd number.

\end{enumerate}

\end{theorem}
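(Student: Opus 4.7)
The plan is to set up a bivariate generating function marking both linear and irreducible quadratic factors, exploit the fact that for square-free $P$ the number of reducible monic quadratic divisors equals $\binom{n_1(P)}{2}$, and extract the expected excess with a single differential operator. In direct analogy with the proofs of Theorems \ref{total} and \ref{lin}, weighting each linear factor by $x$ and each irreducible quadratic factor by $y$ gives
\[ \sum_{P \in P(n,q)} x^{n_1(P)} y^{n_2^{irr}(P)} u^{\deg P} = \frac{(1+xu)^q (1+yu^2)^{N(2,q)}}{(1+u)^q (1+u^2)^{N(2,q)}} \cdot \frac{1-u^2 q}{1-uq}. \]
The reducible monic quadratic divisors of a square-free $P$ biject with unordered pairs of distinct linear factors, so $n_2^{red}(P) = \binom{n_1(P)}{2}$. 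Thus $\sum_P (n_2^{irr}(P) - n_2^{red}(P)) u^{\deg P}$ is obtained by applying $\partial_y - \tfrac{1}{2}\partial_x^2$ and setting $x = y = 1$; using $N(2,q) = q(q-1)/2$ together with $\frac{1}{1+u^2} - \frac{1}{(1+u)^2} = \frac{2u}{(1+u^2)(1+u)^2}$, this collapses cleanly to
\[ G(u) := \frac{q(q-1)\,u^3(1-u^2 q)}{(1-uq)(1+u)^2(1+u^2)}, \]
and the expected excess equals $[u^n] G(u) / (q^n - q^{n-1})$.

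For part (1), the singularity of $G$ closest to the origin is the simple pole at $u = 1/q$, so the asymptotic $[u^n]\frac{f(u)}{1-uq} \sim q^n f(1/q)$, combined with division by $q^n(1-1/q)$, yields the stated limiting value.

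For part (2), I would split $G$ using the identity $1 - u^2 q = (1-uq) + uq(1-u)$ into a ``polynomial'' piece $\frac{q(q-1)u^3}{(1+u)^2(1+u^2)}$ and a ``main'' piece retaining the $1/(1-uq)$ factor. The Taylor coefficients of $1/((1+u)^2(1+u^2))$ come from convolving $(1+u)^{-2} = \sum_k (-1)^k(k+1)u^k$ with $(1+u^2)^{-1} = \sum_j (-1)^j u^{2j}$; a short case analysis on $n \bmod 4$ identifies them (up to sign $(-1)^n$) with the sequence $b_n$ of the theorem, extended by $b_0 := 1$, so the polynomial piece contributes exactly the boundary correction $-(-1)^n b_{n-3}/q^{n-2}$. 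For the main piece, the substitution $w = uq$ reduces the extraction to $(1/q)[w^n]$ of $\frac{w^4(1-w/q)}{(1-w)(1+w/q)^2(1+w^2/q^2)}$; expanding the rational factor as a series in $w/q$, convolving with $1 - w/q$, and summing termwise against $1/(1-w) = \sum_k w^k$ produces the truncated sum $\sum_{i=1}^{n-3} (-1)^{i+1} a_i / q^i$, with the relation $a_i = b_{i-1} + b_{i-2}$ (and $a_1 = 1$) falling out of the convolution. The small cases $n = 2, 3$ are handled separately using $G(u) = O(u^3)$.

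The main obstacle is the final sequence matching: verifying by direct case analysis on parity classes that the Taylor coefficients of $1/((1+u)^2(1+u^2))$ realize the $b_n$ pattern described in the statement, and then checking that the recurrence $a_i = b_{i-1} + b_{i-2}$ indeed produces the claimed $a_i$ pattern of odd numbers interleaved with repeated even numbers.
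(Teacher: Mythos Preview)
Your derivation of the generating function $G(u)$ is correct and agrees with the paper's; the paper computes $E[n_2]$ and $E[\binom{n_1}{2}]$ separately and subtracts, arriving (after the substitution $u\mapsto u/q$) at the equivalent expression
\[
\left[\frac{1}{1+(u/q)^2}-\frac{1}{(1+u/q)^2}\right]\frac{u^2}{2}\,\frac{1-u^2/q}{1-u},
\]
which is just $G(u/q)/(1-1/q)$. Part~(1) is then handled identically in both proofs.

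For part~(2) your route differs in a useful way. The paper proceeds by \emph{verification}: it starts from the claimed closed form, records the generating functions $\sum a_i u^i=\frac{u(1+u)}{(1-u)^2(1+u^2)}$ and $\sum b_i u^i=\frac{1}{(1-u)^2(1+u^2)}-1$ (``one easily checks''), and then confirms algebraically that the resulting series equals the expression above. Your argument is instead a \emph{derivation}: the splitting $1-u^2q=(1-uq)+uq(1-u)$ separates $G$ into a piece whose coefficients are visibly $(-1)^m b_m$ and a piece that, after $w=uq$, produces the partial sums in $a_i$; along the way you uncover the relation $a_i=b_{i-1}+b_{i-2}$ (with $b_0=1$), which the paper does not state but which is equivalent to the identity $\sum a_i u^i=u(1+u)\sum_{i\ge0} b_i u^i$. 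Your approach explains where the two sequences come from and how they are linked; the paper's approach is slightly shorter once one is willing to accept the two generating-function identities for $a_i$ and $b_i$ as ``easily checked''.
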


\begin{proof} Let $P(n,q)$ denote the set of monic degree $n$ square-free polynomials over $F_q$, and for $P \in P(n,q)$, let $n_i(P)$ denote the number of
degree $i$ factors of $P$. We need to computed the expected value of $n_2(P) - {n_1(P) \choose 2}$.

Arguing as in the proof of Theorem \ref{lin} shows that \[ \sum_{P \in P(n,q)} x^{n_2(P)} \] is equal to the coefficient of $u^n$ in
\[ \frac{(1+xu^2)^{(q^2-q)/2}}{(1+u^2)^{(q^2-q)/2}} \frac{1-u^2q}{1-uq} .\] To compute the expected value of $n_2$, one must differentiate with respect to $x$, set $x=1$, take the coefficient of $u^n$ and then divide by $q^n (1-1/q)$. One concludes that $E[n_2]$ is equal to the coefficient of $u^n$ in
\[ \frac{1}{1+u^2} \frac{q^2 u^2}{2q^n} \frac{(1-u^2q)}{(1-uq)} .\]

Next, recall from the proof of Theorem \ref{lin} that \[ \sum_{P \in P(n,q)} x^{n_1(P)} \] is equal to the coefficient of
$u^n$ in \[ \frac{(1+xu)^q}{(1+u)^q} \frac{1-u^2q}{1-uq} .\] To compute $E[n_1(n_1-1)]$, one must differentiate twice with respect to $x$, set $x=1$, take the coefficient of $u^n$ and then divide by $q^n (1-1/q)$. One concludes that $E[n_1(n_1-1)]$, is equal to the coefficient of $u^n$ in
\[ \frac{1}{(1+u)^2} \frac{q^2 u^2}{q^n} \frac{(1-u^2q)}{(1-uq)} .\]

Combining the previous two paragraphs, it follows that the expected value of $n_2(P) - {n_1(P) \choose 2}$ is equal to the coefficient of $u^n$
in \[ \left[ \frac{1}{1+u^2} - \frac{1}{(1+u)^2} \right] \frac{q^2 u^2}{2q^n} \frac{(1-u^2q)}{(1-uq)}.\] This is equal to the coefficient of $u^n$
in \begin{equation} \label{genf} \left[ \frac{1}{1+(u/q)^2} - \frac{1}{(1+u/q)^2} \right] \frac{u^2}{2} \frac{(1-u^2/q)}{1-u} .\end{equation} If a Taylor series of
a function $f$ around $0$ converges at $u=1$, then the $n \rightarrow \infty$ limit of the coefficient of $u^n$ in $f(u)/(1-u)$ is equal to $f(1)$. Thus the $n \rightarrow \infty$ limit of the expected value of $n_2(P) - {n_1(P) \choose 2}$ is equal to
\[ \left[ \frac{1}{1+(1/q)^2} - \frac{1}{(1+1/q)^2} \right] \frac{1}{2} (1-1/q) =  \frac{1}{q} \frac{1-1/q}{(1+1/q)^2(1+1/q^2)}, \]
and the first part of the theorem is proved.

To prove the second assertion, it is necessary to show that
\[ \frac{u^3}{q} + \sum_{n \geq 4} \left( \left[ \sum_{i=1}^{n-3} \frac{(-1)^{i+1} a_i}{q^i} \right] - \frac{(-1)^n b_{n-3}}{q^{n-2}} \right) u^n \] is equal to \eqref{genf}.

Now one computes that
\begin{eqnarray*}
\sum_{n \geq 4} u^n \sum_{i=1}^{n-3} \frac{(-1)^{i+1} a_i}{q^i} & = & \sum_{i \geq 1} \frac{(-1)^{i+1} a_i}{q^i} \sum_{n \geq i+3} u^n \\
& = & \sum_{i \geq 1} \frac{(-1)^{i+1} a_i u^{i+3}}{q^i (1-u)} \\
& = & \frac{-u^3}{1-u} \sum_{i \geq 1} (-u/q)^i a_i.
\end{eqnarray*} One easily checks that
\[ \sum_{i \geq 1} u^i a_i = \frac{u(1+u)}{(1-u)^2 (1+u^2)}, \] which implies that
\begin{equation} \label{fi}
\sum_{n \geq 4} u^n \sum_{i=1}^{n-3} \frac{(-1)^{i+1} a_i}{q^i} = \frac{u^4 (1-u/q)}{q (1-u)(1+u/q)^2 (1+u^2/q^2)}
\end{equation}

Next one computes that
\[ - \sum_{n \geq 4} \frac{(-1)^n b_{n-3} u^n}{q^{n-2}} = -u^3 \sum_{i \geq 1} \frac{(-1)^{i+3} b_i u^i}{q^{i+1}} = \frac{u^3}{q} \sum_{i \geq 1} (-u/q)^i b_i.\] One easily checks that \[ \sum_{i \geq 1} u^i b_i = \frac{1}{(1-u)^2 (1+u^2)} - 1, \] which implies that
\begin{equation} \label{se}
- \sum_{n \geq 4} \frac{(-1)^n b_{n-3} u^n}{q^{n-2}} = \frac{u^3}{q} \left[ \frac{1}{(1+u/q)^2 (1+u^2/q^2)} - 1 \right].
\end{equation}

Combining \eqref{fi} and \eqref{se} completes the proof.
\end{proof}

To close this section, we give a generating function proof of the fact from \cite{CEF} that the discriminant of a random square-free polynomial is
equidistributed in $F_q^*$ between residues and nonresidues.

\begin{prop} For $n \geq 2$, the discriminant of a random monic degree $n$ square-free polynomial is
equidistributed in $F_q^*$ between residues and nonresidues.
\end{prop}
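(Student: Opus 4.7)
The plan is to reduce the claim to a counting identity for signed factorization types via Stickelberger's theorem, and then evaluate the resulting signed generating function using Lemma \ref{prod}.

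First I would recall the relevant classical fact (Stickelberger): for a square-free monic $P \in F_q[x]$ of degree $n$, the discriminant $\text{disc}(P)$ is a nonzero square in $F_q^*$ if and only if the action of Frobenius on the roots of $P$ in $\overline{F_q}$ is an even permutation. Since each monic irreducible factor of $P$ of degree $d$ contributes a single Frobenius cycle of length $d$, the Frobenius permutation has sign $\prod_i (-1)^{d_i-1} = (-1)^{n-k(P)}$, where $k(P)$ denotes the number of monic irreducible factors of $P$. Thus $\text{disc}(P)$ is a square iff $n - k(P)$ is even.

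Next, let $P(n,q)$ again denote the set of monic degree $n$ square-free polynomials. The equidistribution claim is equivalent to
\[ \sum_{P \in P(n,q)} (-1)^{n-k(P)} = 0 \qquad (n \geq 2), \]
or equivalently, after multiplying by $(-1)^n$, to $\sum_P (-1)^{k(P)} = 0$. The generating function for this signed sum is obtained as in the earlier proofs of this section by substituting $x = -1$ into the factor-counting generating function:
\[ \sum_{n \geq 0} u^n \sum_{P \in P(n,q)} (-1)^{k(P)} = \prod_{d \geq 1} (1 - u^d)^{N(d,q)}. \]
By Lemma \ref{prod}, this infinite product equals $1 - qu$. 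Reading off coefficients, the coefficient of $u^n$ is $0$ for all $n \geq 2$, which gives exactly what we need. Since the total count $|P(n,q)| = q^n - q^{n-1}$ is even for $n \geq 2$ and splits with signed sum zero, precisely half of the polynomials in $P(n,q)$ have square discriminant and half have non-square discriminant.

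The only substantive ingredient outside the generating-function framework is the Stickelberger correspondence between discriminant class and Frobenius sign; once that is in hand the computation is essentially immediate from Lemma \ref{prod}. So the main (and only) potential obstacle is to state Stickelberger's theorem cleanly for the reader; the rest collapses to plugging $x = -1$ into a product we have already identified.
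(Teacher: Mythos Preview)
Your proof is correct and follows essentially the same approach as the paper: reduce via Stickelberger (the paper simply cites \cite{CEF} for this reduction) to showing $\sum_{P} (-1)^{k(P)} = 0$, then observe that this is the coefficient of $u^n$ in $\prod_d (1-u^d)^{N(d,q)} = 1 - qu$ by Lemma~\ref{prod}. The only differences are cosmetic: you spell out Stickelberger's theorem explicitly where the paper defers to \cite{CEF}, and the paper adds the small remark that one may assume $q$ is odd (since otherwise every element of $F_q^*$ is a square and the statement is vacuous).
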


\begin{proof} We can assume that $q$ is odd. As noted in \cite{CEF}, it suffices to show that if $\mu(P)=(-1)^d$, where $d$ is the number of irreducible factors in a square-free
polynomial $P$, then $\mu(P)=1$ exactly half the time. Observe that
\[ \sum_{P \in P(n,q)} \mu(P) \] is equal to the coefficient of $u^n$ in
\[ \prod_{d \geq 1} (1-u^d)^{N(d,q)}.\] By Lemma \ref{prod} this is the coefficient of $u^n$ in $1-uq$, which is $0$ for $n \geq 2$.
\end{proof}

\section{Maximal tori} \label{maximal}

For background on maximal tori, we recommend Chapter 3 of \cite{C}. We do point out that $F$-stable maximal tori of $GL_n(\overline{F_q})$ are not the same as maximal tori of $GL_n(q)$; indeed there are four $F$-stable maximal tori of $GL_2(\overline{F_2})$, but only two maximal tori of $GL_2(2)$. In this section we using generating functions to study random $F$-stable maximal tori of $GL_n(\overline{F_q})$. There is a map from the set of such tori to the conjugacy classes of the symmetric group $S_n$. The conjugacy classes of $S_n$ are parameterized by partitions $\lambda$ of $n$, and we say that a torus mapping to the partition $\lambda$ has type $\lambda$. Lemma \ref{form} enumerates the number of tori of type $\lambda$, which will be crucial to our approach.

\begin{lemma} \label{form} Let $\lambda$ be a partition of $n$, and let $n_i$ denote the number of parts of $\lambda$ of size $i$. Then the number of $F$-stable maximal tori of $GL_n(\overline{F_q})$ of type $\lambda$ is equal to
\[ \frac{|GL(n,q)|}{\prod_i i^{n_i} n_i! \prod_i (q^i-1)^{n_i}}.\]
\end{lemma}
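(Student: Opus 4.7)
The plan is to appeal to the standard classification of $F$-stable maximal tori of a connected reductive group, specialized to $G = GL_n$, and then extract the three elementary numerical ingredients that appear in the formula.

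First, I would recall from Chapter 3 of \cite{C} that the $F$-stable maximal tori of $G$ fall into $G^F$-conjugacy classes indexed by the $F$-conjugacy classes of the Weyl group $W$; for a representative $w$, the corresponding torus $T_w$ has $|T_w^F|$ fixed points, and its normalizer satisfies
\[ |N_{G^F}(T_w)| = |T_w^F| \cdot |C_{W,F}(w)|, \]
so the number of $F$-stable maximal tori in the $G^F$-orbit of $T_w$ is $|G^F|/\bigl(|T_w^F| \cdot |C_{W,F}(w)|\bigr)$. For $G = GL_n(\overline{F_q})$, the Frobenius $F$ acts trivially on $W = S_n$, hence $F$-conjugacy in $W$ is ordinary conjugacy, and the $F$-conjugacy classes are the usual conjugacy classes, parametrized by partitions $\lambda$ of $n$. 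This is what justifies talking about an unambiguous ``type $\lambda$'' of a torus.

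Next, I would compute each of the two factors in the denominator for $w \in S_n$ of cycle type $\lambda$ with $n_i$ cycles of length $i$. The centralizer in $S_n$ of such a permutation is the standard product of wreath products, giving
\[ |C_{W}(w)| = \prod_i i^{n_i}\, n_i!. \]
For the torus, $T_w$ decomposes as a direct product indexed by the cycles of $w$: an $i$-cycle contributes the twisted one-dimensional torus whose $F$-fixed points are $F_{q^i}^*$, of order $q^i - 1$. Thus
\[ |T_w^F| = \prod_i (q^i - 1)^{n_i}. \]
Substituting these two factors, together with $|G^F| = |GL(n,q)|$, into $|G^F|/(|T_w^F|\cdot|C_W(w)|)$ yields exactly the formula claimed in the lemma.

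The step that needs care (rather than ``difficulty'') is simply the invocation of the normalizer formula $|N_{G^F}(T_w)| = |T_w^F|\cdot |C_{W,F}(w)|$ and the fact that the map from tori to conjugacy classes of $S_n$ alluded to before the lemma statement is precisely the bijection from \cite{C}. Once that is quoted, the remaining content is two completely routine combinatorial identities: the size of a centralizer in $S_n$, and the decomposition of a twisted torus in $GL_n$ as a product of $F_{q^i}^*$'s according to the cycle type. No obstacle is expected.
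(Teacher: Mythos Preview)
Your proposal is correct and follows essentially the same route as the paper: the paper's proof simply cites Section~2.7 of \cite{SS} (rather than Chapter~3 of \cite{C}) for the general formula that the number of $F$-stable maximal tori in the class of $T_w$ equals $|G^F|/(|T_w^F|\cdot|C_W(w)|)$, and then plugs in the centralizer size in $S_n$. You have merely spelled out explicitly the computation of $|T_w^F|$ and the reduction of $F$-conjugacy to ordinary conjugacy, both of which the paper leaves implicit in its citation.
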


\begin{proof} This is immediate from Section 2.7 of \cite{SS}, together with the fact that the centralizer size of an element of $S_n$ of type $\lambda$ is equal to $\prod_i i^{n_i} n_i!$. \end{proof}

{\it Remark:} A special case of Lemma \ref{form} is that the number of irreducible tori is equal to \[ \frac{q^{{n \choose 2}}}{n} (q-1) (q^2-1) \cdots (q^{n-1}-1). \] For an interesting proof of this using topology and representation theory, see Proposition 5.12 of \cite{CEF}.

As a consequence of Lemma \ref{form}, one has the following generating function, which is similar to the ``cycle index'' of the symmetric groups. We let $T(n,q)$ denote the set of $F$-stable maximal tori $T$ of $GL_n(\overline{F_q})$, and let $n_i(T)$ denote the number of parts of size $i$ of the partition corresponding to $T$.

\begin{theorem} \label{cyc}
\[ 1 + \sum_{n \geq 1} \frac{u^n}{|GL(n,q)|} \sum_{T \in T(n,q)} \prod_i x_i^{n_i(T)} =
\prod_{i \geq 1} \exp \left[ \frac{x_i u^i}{(q^i-1) i} \right]. \]
\end{theorem}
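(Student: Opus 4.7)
The plan is to substitute the count from Lemma \ref{form} into the left-hand side and show that the resulting sum over partitions factors as a product over part sizes $i$, with the $i$th factor being the exponential series on the right-hand side. This is the standard ``exponential formula'' style manipulation that turns a cycle-type sum into an infinite product.

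First I would reindex. A torus $T$ has a type $\lambda \vdash n$, and Lemma \ref{form} says the number of tori of type $\lambda$ depends only on the multiplicities $n_i = n_i(\lambda)$. So
\[
\sum_{T \in T(n,q)} \prod_i x_i^{n_i(T)} \;=\; \sum_{\lambda \vdash n} \frac{|GL(n,q)|}{\prod_i i^{n_i} n_i! (q^i-1)^{n_i}} \prod_i x_i^{n_i}.
\]
Dividing by $|GL(n,q)|$ and summing against $u^n$, the factor $|GL(n,q)|$ cancels cleanly, which is the whole point of putting $|GL(n,q)|$ in the denominator of the generating function.

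Next I would swap the order of summation. A partition $\lambda$ of $n$ is the same data as a sequence $(n_1,n_2,\dots)$ of nonnegative integers with finitely many nonzero entries subject to $\sum_i i n_i = n$. Summing over all $n\geq 0$ and all $\lambda \vdash n$ is therefore the same as summing over all such sequences $(n_i)_{i \geq 1}$, with no size constraint. Since $u^n = \prod_i u^{i n_i}$, the double sum becomes
\[
\sum_{(n_i)_{i\geq 1}} \prod_i \frac{1}{n_i!} \left(\frac{x_i u^i}{i(q^i-1)}\right)^{n_i},
\]
where the sum over the constant sequence $(0,0,\dots)$ contributes the $1$ on the left-hand side.

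The final step is to recognize this as a product of independent sums, one per coordinate $i$. Each single-coordinate sum is $\sum_{n_i \geq 0} \frac{1}{n_i!}\bigl(\frac{x_i u^i}{i(q^i-1)}\bigr)^{n_i} = \exp\!\left[\frac{x_i u^i}{i(q^i-1)}\right]$, giving exactly the right-hand side. I do not anticipate a genuine obstacle here: the only thing to be careful about is the cosmetic bookkeeping that the $1$ on the left matches the empty-partition term, and that the product over $i$ really is well-defined as a formal power series in $u$ (which it is, since only finitely many $i$ contribute to the coefficient of each $u^n$).
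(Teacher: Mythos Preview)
Your proposal is correct and follows essentially the same approach as the paper: substitute Lemma~\ref{form} to convert the torus sum into a sum over partitions, then recognize the resulting expression as the product of exponential series. The paper compresses your second and third steps into the single phrase ``by the Taylor expansion of the exponential function,'' but the argument is identical.
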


\begin{proof} Lemma \ref{form} implies that the left-hand side of the theorem is equal to
\[ 1 + \sum_{n \geq 1} u^n \sum_{|\lambda|=n} \frac{\prod_i x_i^{n_i}}{\prod_i i^{n_i} n_i! (q^i-1)^{n_i}} .\]
By the Taylor expansion of the exponential function, this is equal to \[ \prod_{i \geq 1} \exp \left[ \frac{x_i u^i}{(q^i-1) i} \right] .\]
\end{proof}

The following result of Euler, which is a special case of Corollary 2.2 of \cite{A}, will be helpful.

\begin{lemma} \label{euler}
\begin{enumerate}
\item \[ \prod_{r \geq 1} \frac{1}{1-u/q^r} = 1 + \sum_{n \geq 1} \frac{u^n}{q^n (1-1/q) \cdots (1-1/q^n)} \]

\item \[ \prod_{r \geq 1} (1+u/q^r) = 1 + \sum_{n \geq 1} \frac{u^n}{q^{{n+1 \choose 2}} (1-1/q) \cdots (1-1/q^n)} \]
\end{enumerate}
\end{lemma}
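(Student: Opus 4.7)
The plan is to prove both identities by the standard functional-equation trick: show that each infinite product satisfies a simple recursion under $u \mapsto u/q$, then extract a recursion for the Taylor coefficients and solve it inductively.

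For part (1), I would let $F(u) = \prod_{r \geq 1} (1-u/q^r)^{-1}$. Shifting the index by one gives
\[ F(u/q) = \prod_{r \geq 1} \frac{1}{1-u/q^{r+1}} = \prod_{s \geq 2} \frac{1}{1-u/q^s} = (1-u/q)\, F(u), \]
so $F$ satisfies the functional equation $(1-u/q) F(u) = F(u/q)$. Writing $F(u) = \sum_{n \geq 0} c_n u^n$ and comparing coefficients of $u^n$ on both sides gives $c_n - c_{n-1}/q = c_n/q^n$, i.e.\ $c_n (1-1/q^n) = c_{n-1}/q$. Since $c_0 = 1$, induction yields
\[ c_n = \frac{1}{q^n (1-1/q)(1-1/q^2) \cdots (1-1/q^n)}, \]
which is the desired formula.

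For part (2), I would let $G(u) = \prod_{r \geq 1} (1+u/q^r)$. The same index shift gives $G(u/q) = G(u)/(1+u/q)$, i.e.\ $G(u) = (1+u/q) G(u/q)$. Writing $G(u) = \sum_{n \geq 0} d_n u^n$ and comparing coefficients of $u^n$ gives $d_n = d_n/q^n + d_{n-1}/q^n$, hence $d_n(1-1/q^n) = d_{n-1}/q^n$. Starting from $d_0 = 1$, induction gives
\[ d_n = \frac{1}{q^{1+2+\cdots+n} (1-1/q)(1-1/q^2)\cdots(1-1/q^n)} = \frac{1}{q^{\binom{n+1}{2}} (1-1/q)\cdots(1-1/q^n)}, \]
as claimed.

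There is no real obstacle here: the only small care required is the bookkeeping to ensure the $q$-powers in the denominators come out to $q^n$ in part (1) and $q^{\binom{n+1}{2}}$ in part (2), which follows simply from telescoping the recursions $c_n = c_{n-1}/(q(1-1/q^n))$ and $d_n = d_{n-1}/(q^n(1-1/q^n))$. Alternatively, both identities could be cited directly from Andrews \cite{A} since they are just the two classical Euler $q$-series identities; the functional-equation proof above is included only to keep the argument self-contained.
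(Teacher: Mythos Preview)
Your proof is correct. The functional-equation argument is exactly Euler's original method, and your coefficient recursions and their solutions are right in both parts.

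The paper itself does not prove this lemma at all: it simply states the two identities and cites them as a special case of Corollary 2.2 of Andrews \cite{A}. So your approach differs from the paper's only in that you have supplied a self-contained argument where the paper is content to quote the literature. You even note the citation alternative at the end, which is precisely what the paper does. Either route is fine here; your version has the advantage of being self-contained, at the cost of a few extra lines.
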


Theorem \ref{count} proves that the total number of $F$-stable maximal tori of $GL_n(\overline{F_q})$ is equal to $q^{n^2-n}$. This formula is due to Steinberg \cite{St}; proofs using the Grothendieck-Lefschetz formula have been given by Srinivasan \cite{S} and Lehrer \cite{Le}. We give a proof using the generating function of Theorem \ref{cyc}.

\begin{theorem} \label{count} The number of $F$-stable maximal tori of $GL_n(\overline{F_q})$ is equal to $q^{n^2-n}$.
\end{theorem}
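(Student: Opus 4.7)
The plan is to specialize Theorem \ref{cyc} by setting every $x_i = 1$, which turns the left-hand side into a generating function whose coefficient of $u^n$ is $|T(n,q)|/|GL(n,q)|$. Then the right-hand side becomes
\[
\prod_{i \geq 1} \exp\!\left[\frac{u^i}{i(q^i-1)}\right],
\]
and the task reduces to identifying this as a known product to which Lemma \ref{euler}(1) applies.

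The key manipulation is to expand $\frac{1}{q^i-1} = \sum_{r \geq 1} q^{-ir}$, so that
\[
\sum_{i \geq 1} \frac{u^i}{i(q^i-1)} = \sum_{r \geq 1}\sum_{i \geq 1} \frac{(u/q^r)^i}{i} = -\sum_{r \geq 1} \log(1-u/q^r).
\]
Exponentiating, the generating function becomes $\prod_{r \geq 1}(1-u/q^r)^{-1}$, and Lemma \ref{euler}(1) immediately gives the coefficient of $u^n$ as $1/\bigl(q^n(1-1/q)\cdots(1-1/q^n)\bigr)$. Multiplying by $|GL(n,q)| = q^{\binom{n}{2}}\prod_{i=1}^n (q^i-1)$ and writing $\prod_{i=1}^n(q^i-1) = q^{n(n+1)/2}\prod_{i=1}^n(1-1/q^i)$, the factors of $\prod(1-1/q^i)$ cancel and the exponent of $q$ simplifies to $\binom{n}{2} + \binom{n+1}{2} - n = n^2 - n$, as desired.

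The only real obstacle is recognizing the product-to-sum identity that turns the product of exponentials into the Eulerian product $\prod_r (1-u/q^r)^{-1}$; once that is in hand, the rest is straightforward bookkeeping with $q$-factorials. It is also worth noting that this same strategy — specialize Theorem \ref{cyc}, rewrite the exponential via a geometric-series expansion, and apply Euler — is exactly the pattern that will be reused in the subsequent results on expected numbers of eigenvectors and subtori, so this proof serves as the template for the rest of Section \ref{maximal}.
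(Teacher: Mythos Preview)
Your proposal is correct and follows essentially the same approach as the paper: specialize Theorem \ref{cyc} at $x_i=1$, expand $1/(q^i-1)$ as a geometric series in $q^{-i}$, swap the order of summation to recognize $\prod_{r\geq 1}(1-u/q^r)^{-1}$, and then apply Lemma \ref{euler}(1) together with the formula for $|GL(n,q)|$. Your final exponent bookkeeping $\binom{n}{2}+\binom{n+1}{2}-n=n^2-n$ is just a slightly more explicit rendering of the paper's one-line simplification $|GL(n,q)|/\bigl(q^n(1-1/q)\cdots(1-1/q^n)\bigr)=q^{n^2-n}$.
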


\begin{proof} It follows from setting all the $x_i=1$ in Theorem \ref{cyc} that the number of $F$-stable maximal tori of $GL_n(\overline{F_q})$ is equal to
$|GL(n,q)|$ multiplied by the coefficient of $u^n$ in
\begin{eqnarray*}
\prod_{i \geq 1} \exp \left( \frac{1}{(q^i-1)} \frac{u^i}{i} \right) & = & \prod_{i \geq 1} \exp \left( \frac{1}{q^i (1-1/q^i)} \frac{u^i}{i} \right) \\
& = & \prod_{i \geq 1} \exp \left( \frac{u^i}{i} \sum_{r \geq 1} \frac{1}{q^{ir}} \right) \\
& = & \prod_{r \geq 1} \prod_{i \geq 1} \exp \left( (u/q^r)^i / i \right) \\
& = & \prod_{r \geq 1} \exp \left( - \log(1-u/q^r) \right) \\
& = & \prod_{r \geq 1} \frac{1}{1-u/q^r}.
\end{eqnarray*} Using part 1 of Lemma \ref{euler}, it follows that the total number of maximal tori of $GL(n,q)$ is equal to
\[ \frac{|GL(n,q)|}{q^n (1-1/q) \cdots (1-1/q^n)} = q^{n^2-n} .\]
\end{proof}

{\it Remark:} One can pick an $F$-stable maximal torus of $GL_n(\overline{F_q})$ uniformly at random, and let $\lambda$ be the partition of $n$ corresponding to its
type. From Lemma \ref{form} and Theorem \ref{count}, it follows that the resulting random partition is equal to a partition $\lambda$ of $n$ with probability \[ \frac{(1-1/q) \cdots (1-1/q^n)}{\prod_i i^{n_i} n_i! \prod_i (1-1/q^i)^{n_i}} .\] This measure is a special case of the random partitions studied in \cite{DR}. The fact that
\[ \sum_{|\lambda|=n} \frac{(1-1/q) \cdots (1-1/q^n)}{\prod_i i^{n_i} n_i! \prod_i (1-1/q^i)^{n_i}} = 1 \] is an identity which Andrews (page 81 of \cite{A}) attributes to Cayley.

As noted in the introduction, Church, Ellenberg, and Farb \cite{CEF} calculate the expected number of eigenvectors of a random $F$-stable maximal torus of $GL_n(\overline{F_q})$. Here by ``number of eigenvectors'' of $T$ is meant the number of lines in the projective space $P^{n-1}(F_q)$ fixed by $T$. Theorem \ref{eigen} uses generating functions to derive their formula.

\begin{theorem} \label{eigen} The expected number of eigenvectors of a random $F$-stable maximal torus of $GL_n(\overline{F_q})$ is equal to
 \[ 1 + \frac{1}{q} + \frac{1}{q^2} + \cdots + \frac{1}{q^{n-1}} .\]
\end{theorem}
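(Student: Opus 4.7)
The plan is to apply the cycle index from Theorem \ref{cyc} with a single indeterminate attached to parts of size one, since for a torus $T$ of type $\lambda$ the number of $F_q$-rational lines in $P^{n-1}(F_q)$ fixed by $T$ equals $n_1(T)$, the number of parts of $\lambda$ of size one. Indeed, $T$ decomposes the natural module into $F$-stable pieces, one of dimension $i$ for each part of size $i$, and only the pieces of dimension one contribute $F_q$-rational fixed lines. So it suffices to compute $E[n_1(T)]$.

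Specialize Theorem \ref{cyc} by setting $x_1 = x$ and $x_i = 1$ for $i \geq 2$. The generating function becomes
\[
1 + \sum_{n \geq 1} \frac{u^n}{|GL(n,q)|} \sum_{T \in T(n,q)} x^{n_1(T)} \;=\; \exp\!\left[\frac{xu}{q-1}\right] \prod_{i \geq 2} \exp\!\left[\frac{u^i}{i(q^i-1)}\right].
\]
Differentiating with respect to $x$ and then setting $x=1$ gives
\[
\sum_{n \geq 1} \frac{u^n}{|GL(n,q)|} \sum_{T \in T(n,q)} n_1(T) \;=\; \frac{u}{q-1}\prod_{i \geq 1} \exp\!\left[\frac{u^i}{i(q^i-1)}\right].
\]
The remaining product is exactly the one computed in the proof of Theorem \ref{count}, and equals $\prod_{r \geq 1} 1/(1-u/q^r)$. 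Hence the right-hand side is $\frac{u}{q-1}\prod_{r \geq 1}(1-u/q^r)^{-1}$.

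Extracting the coefficient of $u^n$ by shifting and invoking part (1) of Lemma \ref{euler},
\[
[u^n]\;\frac{u}{q-1}\prod_{r \geq 1}\frac{1}{1-u/q^r} \;=\; \frac{1}{q-1}\cdot\frac{1}{q^{n-1}(1-1/q)\cdots(1-1/q^{n-1})}.
\]
Since $T(n,q)$ has $q^{n^2-n}$ elements by Theorem \ref{count}, and $|GL(n,q)| = q^{n^2}\prod_{i=1}^{n}(1-1/q^i)$, we have $|GL(n,q)|/q^{n^2-n} = q^n\prod_{i=1}^{n}(1-1/q^i)$. Multiplying and telescoping the product of $(1-1/q^i)$ factors leaves only $q^n(1-1/q^n)/(q^{n-1}(q-1)) = (1-1/q^n)/(1-1/q)$, which is the geometric sum $1 + 1/q + \cdots + 1/q^{n-1}$.

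The only genuine conceptual step is the combinatorial identification of the number of fixed lines with $n_1(T)$; once that is in hand, the argument is a routine application of the cycle index followed by the Euler identity already used in Theorem \ref{count}, and I expect no technical obstacle beyond the bookkeeping of the factor $|GL(n,q)|/q^{n^2-n}$.
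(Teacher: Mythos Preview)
Your proof is correct and follows essentially the same approach as the paper: specialize the cycle index of Theorem \ref{cyc} at $x_1=x$, $x_i=1$ for $i\geq 2$, differentiate in $x$, set $x=1$, identify the product with $\prod_{r\geq 1}(1-u/q^r)^{-1}$ via Theorem \ref{count}, extract the coefficient of $u^n$ using Lemma \ref{euler}(1), and normalize by $|GL(n,q)|/q^{n^2-n}$. The only cosmetic difference is that the paper first rewrites the generating function as $\exp[(x-1)u/(q-1)]\prod_{r\geq 1}(1-u/q^r)^{-1}$ before differentiating, whereas you differentiate first and then simplify; the resulting expression $\frac{u}{q-1}\prod_{r\geq 1}(1-u/q^r)^{-1}$ and the remaining arithmetic are identical.
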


\begin{proof} As noted in \cite{CEF}, the number of eigenvectors of an $F$-stable maximal torus $T$ of is equal to $n_1(T)$.
Set $x_1=x$ and $x_i=1$ for $i \geq 2$ in Theorem \ref{cyc}. It follows that
\begin{eqnarray*}
1 + \sum_{n \geq 1} \frac{u^n}{|GL(n,q)|} \sum_{T \in T(n,q)} x^{n_1(T)} & = & \exp \left[ \frac{xu}{q-1} \right] \prod_{i \geq 2} \exp \left[ \frac{u^i}{(q^i-1)i} \right] \\
& = & \exp \left[ \frac{(x-1)u}{q-1} \right] \prod_{i \geq 1} \exp \left[ \frac{u^i}{(q^i-1)i} \right] \\
& = & \exp \left[ \frac{(x-1)u}{q-1} \right] \prod_{r \geq 1} \frac{1}{1-u/q^r},
\end{eqnarray*} where the last step followed by arguing as in Theorem \ref{count}.

To compute the expected value of $n_1(T)$, one must differentiate with respect to x, set $x=1$, take the coefficient of $u^n$, and then multiply by $|GL(n,q)|/q^{n^2-n}$. Differentiating with respect to $x$ and setting $x=1$ yields
\[ \frac{u}{q-1} \prod_{r \geq 1} \frac{1}{1-u/q^r} .\] By part 1 of Lemma \ref{euler}, taking the coefficient of $u^n$ yields
\[ \frac{1}{q-1} \frac{1}{q^{n-1} (1-1/q) \cdots (1-1/q^{n-1})}.\] Multiplying this by $|GL(n,q)|/q^{n^2-n}$ yields
\[ \frac{1-1/q^n}{1-1/q} = 1 + \frac{1}{q} + \frac{1}{q^2} + \cdots + \frac{1}{q^{n-1}}, \] as needed.
\end{proof}

As stated in the introduction, the paper \cite{CEF} computes the expected excess of reducible vs. irreducible dimension two subtori of a random maximal torus. They show that as $n \rightarrow \infty$, this quantity tends to \[ \frac{1}{q} \frac{1}{(1-1/q)(1-1/q^2)} .\] Theorem \ref{excess}
extends this to finite $n$.

\begin{theorem} \label{excess} Let $T$ be a random $F$-stable maximal torus of $GL_n(\overline{F_q})$. Then for $n \geq 2$, the expected value of the number of reducible dimension two subtori minus the number of irreducible dimension two subtori is equal to
\[ \frac{1}{q} \frac{(1-1/q^{n-1}) (1-1/q^n)}{(1-1/q)(1-1/q^2)} .\]
\end{theorem}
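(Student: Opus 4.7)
The plan is to mirror the approach of Theorem \ref{exc}, writing the desired excess as $E[\binom{n_1(T)}{2}] - E[n_2(T)]$, where each reducible dimension 2 subtorus is a pair of the $n_1(T)$ one-dimensional components and each irreducible dimension 2 subtorus is a part of size 2. Both expectations can be extracted from the cycle index of Theorem \ref{cyc}.

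First I would compute $E[n_2(T)]$. Set $x_2 = x$ and $x_i = 1$ for $i \ne 2$ in Theorem \ref{cyc}. As in the proof of Theorem \ref{eigen}, the right-hand side becomes
\[
\exp\!\left[\frac{(x-1)u^2}{2(q^2-1)}\right] \prod_{r \geq 1} \frac{1}{1-u/q^r}.
\]
Differentiating in $x$, setting $x=1$, extracting the coefficient of $u^n$ via part 1 of Lemma \ref{euler}, and multiplying by $|GL(n,q)|/q^{n^2-n} = q^n \prod_{i=1}^n (1-1/q^i)$, I expect to get
\[
E[n_2(T)] = \frac{(1-1/q^{n-1})(1-1/q^n)}{2(1-1/q^2)}.
\]

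Next I would compute $E[n_1(n_1-1)]$ by the same recipe, this time setting $x_1 = x$ and $x_i = 1$ otherwise, which produces $\exp[(x-1)u/(q-1)] \prod_{r\ge 1}(1-u/q^r)^{-1}$. Differentiating twice in $x$, setting $x=1$, and repeating the extraction yields
\[
E[n_1(n_1-1)] = \frac{(1-1/q^{n-1})(1-1/q^n)}{(1-1/q)^2},
\]
so $E[\binom{n_1}{2}]$ is half of this. Subtracting $E[n_2]$ and pulling out the common factor $(1-1/q^{n-1})(1-1/q^n)/2$, the remaining task is the elementary simplification
\[
\frac{1}{(1-1/q)^2} - \frac{1}{1-1/q^2} \;=\; \frac{2/q}{(1-1/q)(1-1/q^2)},
\]
after which everything collapses to the claimed formula.

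The genuinely novel step is only the bookkeeping for dimension 2 subtori (checking that reducibles are counted by $\binom{n_1}{2}$ and irreducibles by $n_2$); the remaining work is a transparent instance of the template already used in Theorems \ref{count} and \ref{eigen}, and the main ``obstacle'' is just the final algebraic manipulation, which is routine. No new identities beyond part 1 of Lemma \ref{euler} are needed.
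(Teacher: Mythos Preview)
Your proposal is correct and follows essentially the same approach as the paper: interpret the excess as $E[\binom{n_1}{2}] - E[n_2]$, compute each expectation by specializing the cycle index of Theorem \ref{cyc}, differentiating, extracting the coefficient of $u^n$ via part~1 of Lemma \ref{euler}, and renormalizing by $|GL(n,q)|/q^{n^2-n}$. The only differences are cosmetic (the order of the two computations and writing $1/(1-1/q)^2$ in place of $q^2/(q-1)^2$).
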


\begin{proof} The number of reducible dimension two subtori is equal to ${n_1(T) \choose 2}$, and the number of irreducible dimension two subtori is equal to $n_2(T)$, so it is necessary to compute the expected value of \[ {n_1(T) \choose 2} - n_2(T).\]

First we compute the expected value of $n_1(n_1-1)$. Recall from the proof of Theorem \ref{eigen} that
\[ 1 + \sum_{n \geq 1} \frac{u^n}{|GL(n,q)|} \sum_{T \in T(n,q)} x^{n_1(T)} = \exp \left[ \frac{(x-1)u}{q-1} \right] \prod_{r \geq 1} \frac{1}{1-u/q^r}.\] To compute the expected value of $n_1(n_1-1)$, one must differentiate twice with respect to x, set $x=1$, take the coefficient of $u^n$, and then multiply by $|GL(n,q)|/q^{n^2-n}$. Differentiating twice with respect to $x$ and setting $x=1$ yields \[ \frac{u^2}{(q-1)^2} \prod_{r \geq 1} \frac{1}{1-u/q^r}.\] By part 1 of Lemma \ref{euler}, taking the coefficient of $u^n$ yields
\[ \frac{1}{(q-1)^2} \frac{1}{q^{n-2} (1-1/q) \cdots (1-1/q^{n-2})}.\] Multiplying this by $|GL(n,q)|/q^{n^2-n}$ yields
\[ \frac{q^2 (1-1/q^{n-1}) (1-1/q^n)}{(q-1)^2}. \] Thus
\begin{equation} \label{fir} E \left[ {n_1(T) \choose 2} \right] = \frac{q^2 (1-1/q^{n-1}) (1-1/q^n)}{2 (q-1)^2}. \end{equation}

Next we compute the expected value of $n_2$. In Theorem \ref{cyc}, set $x_2=x$ and all other $x_i=1$. It follows that
\begin{eqnarray*}
1 + \sum_{n \geq 1} \frac{u^n}{|GL(n,q)|} \sum_{T \in T(n,q)} x^{n_2(T)} & = & \exp \left[ \frac{xu^2}{2(q^2-1)} \right] \prod_{i \neq 2}
\exp \left[ \frac{u^i}{(q^i-1)i} \right] \\
& = & \exp \left[ \frac{(x-1) u^2}{2(q^2-1)} \right] \prod_{i \geq 1}
\exp \left[ \frac{u^i}{(q^i-1)i} \right] \\
& = & \exp \left[ \frac{(x-1) u^2}{2(q^2-1)} \right] \prod_{r \geq 1} \frac{1}{1-u/q^r},
\end{eqnarray*} where the last step is from the proof of Theorem \ref{count}. To compute the expected value of $n_2$, one must differentiate with respect to x, set $x=1$, take the coefficient of $u^n$, and then multiply by $|GL(n,q)|/q^{n^2-n}$. Differentiating with respect
to $x$ and setting $x=1$ yields \[ \frac{u^2}{2(q^2-1)} \prod_{r \geq 1} \frac{1}{1-u/q^r} .\] By part 1 of Lemma \ref{euler},
taking the coefficient of $u^n$ yields \[ \frac{1}{2(q^2-1)} \frac{1}{q^{n-2} (1-1/q) \cdots (1-1/q^{n-2})}.\] Multiplying this by $|GL(n,q)|/q^{n^2-n}$, it follows that
\begin{equation} \label{sec} E[n_2(T)] = \frac{q^2}{2(q^2-1)} (1-1/q^{n-1})(1-1/q^n). \end{equation}

The result now follows by combining equations \eqref{fir} and \eqref{sec}.
\end{proof}

As stated in the introduction, the paper \cite{CEF} shows that the number of irreducible factors of an $F$-stable maximal torus of $GL_n(\overline{F_q})$ is more likely to be $\equiv$ n mod 2 than not, with bias equal to $q^{(n^2-n)/2}$ (which is the square root of the number of tori). Theorem \ref{mod} proves this using generating functions.

\begin{theorem} \label{mod} The number of $F$-stable maximal tori of $GL_n(\overline{F_q})$ with number of irreducible factors $\equiv$ n mod 2, minus the number of $F$-stable maximal tori of $GL_n(\overline{F_q})$ with number of irreducible factors $\not \equiv$ n mod 2, is equal to $q^{(n^2-n)/2}$.
\end{theorem}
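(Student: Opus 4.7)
The plan is to apply Theorem \ref{cyc} with the sign specialization $x_i=-1$ for all $i$, which replaces the product $\prod_i x_i^{n_i(T)}$ by $(-1)^{\sum_i n_i(T)}$, i.e.\ the sign $(-1)^{\#\text{irr factors}}$. To turn this into the bias $(\#T \text{ with \#factors} \equiv n) - (\#T \text{ with \#factors} \not\equiv n)$, I will multiply by the overall sign $(-1)^n$ at the end. So the quantity to compute is $(-1)^n |GL(n,q)|$ times the coefficient of $u^n$ in
\[ \prod_{i \geq 1} \exp\left[\frac{-u^i}{(q^i-1)i}\right]. \]

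Next I will manipulate this product by exactly the same chain of identities used in the proof of Theorem \ref{count}, only with an extra minus sign carried through. Writing $\tfrac{1}{q^i-1} = \tfrac{1}{q^i}\tfrac{1}{1-1/q^i} = \sum_{r\geq 1} q^{-ir}$ and interchanging the $i$ and $r$ products gives
\[ \prod_{i \geq 1} \exp\!\left[\frac{-u^i}{(q^i-1)i}\right] = \prod_{r \geq 1} \exp\!\left[-\sum_{i\geq 1} \frac{(u/q^r)^i}{i}\right] = \prod_{r \geq 1} (1-u/q^r). \]
At this point, instead of invoking part 1 of Lemma \ref{euler}, I will invoke part 2 with $u$ replaced by $-u$, yielding
\[ \prod_{r \geq 1} (1 - u/q^r) = 1 + \sum_{n \geq 1} \frac{(-1)^n u^n}{q^{\binom{n+1}{2}}(1-1/q)\cdots(1-1/q^n)}. \]

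So the coefficient of $u^n$ is $(-1)^n / \bigl(q^{\binom{n+1}{2}}(1-1/q)\cdots(1-1/q^n)\bigr)$, and the bias equals
\[ \frac{|GL(n,q)|}{q^{\binom{n+1}{2}}(1-1/q)\cdots(1-1/q^n)}, \]
the two factors of $(-1)^n$ having cancelled. Finally, the routine identity $|GL(n,q)| = q^{n^2}\prod_{i=1}^n(1-1/q^i)$ reduces this to $q^{n^2 - \binom{n+1}{2}} = q^{(n^2-n)/2}$, as claimed.

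There is no real obstacle: the only subtlety is bookkeeping the signs correctly, in particular remembering that the target weight is $(-1)^{n+\#\text{factors}}$ rather than $(-1)^{\#\text{factors}}$, and that the two resulting powers of $(-1)^n$ must cancel to produce a positive answer. The structural work is identical to Theorem \ref{count}, but uses part 2 of Lemma \ref{euler} in place of part 1; this is ultimately what forces the exponent to be $\binom{n+1}{2}$ instead of $n$, giving the square root $q^{(n^2-n)/2}$ of the total count $q^{n^2-n}$.
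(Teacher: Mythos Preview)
Your proof is correct and follows essentially the same route as the paper: the paper makes the substitution $x_i=-1$, $u\mapsto -u$ in Theorem~\ref{cyc} to obtain $\prod_{r\geq 1}(1+u/q^r)$ and then applies part~2 of Lemma~\ref{euler} directly, whereas you set $x_i=-1$ only, obtain $\prod_{r\geq 1}(1-u/q^r)$, and compensate with an explicit factor of $(-1)^n$ at the end. The two are trivially equivalent rearrangements of the same sign bookkeeping.
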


\begin{proof} Setting all $x_i=-1$ and $u=-u$ in Theorem \ref{cyc}, it follows that the sought quantity is equal to $|GL(n,q)|$ multiplied by the coefficient of $u^n$ in \[ \prod_{i \geq 1} \exp \left[ - \frac{(-u)^i}{(q^i-1)i} \right] = \prod_{r \geq 1} (1+u/q^r),\] where the equality is proved by arguing as in the proof of Theorem \ref{count}. The result now follows from part 2 of Lemma \ref{euler}.
\end{proof}

\end{document}